\documentclass[11pt]{article}
\typeout{11pt or 12pt recommended}
\renewcommand{\baselinestretch}{1.2}
\addtolength{\textwidth}{2.3cm} 
\addtolength{\oddsidemargin}{-1cm}
\addtolength{\textheight}{3cm}
\addtolength{\topmargin}{-1cm}
\newcommand{\dated}{\mbox{} \hfill {\small [{\tt \today}]}} \usepackage{amsmath,amssymb,amsfonts,diagrams}
%
%
\newenvironment{keywords}{\noindent\small {\it Keywords\/}:}{\vskip 4pt}
\newenvironment{classification}{\noindent\small 2000 {\it Mathematics Subject
Classification\/}:}{\vskip 12pt}

%
%

%
%

\newcommand{\posints}{{\mathbb N}}

\newcommand{\tensor}{\otimes}

\newcommand{\Tensor}{\hat{\otimes}}

\newcommand{\wTensor}{\check{\otimes}}

\newcommand{\cstar}{{C^\ast}}

\newcommand{\id}{{\mathrm{id}}}
\newcommand{\cb}{{\mathrm{cb}}}

\newcommand{\CB}{{\cal CB}}

\newcommand{\VN}{\operatorname{VN}}

%
\usepackage{amsthm,enumerate}
\theoremstyle{plain}
\newtheorem{theorem}{Theorem}[section]
\newtheorem{lemma}[theorem]{Lemma}
\newtheorem{corollary}[theorem]{Corollary}
\newtheorem{proposition}[theorem]{Proposition}
\theoremstyle{definition}
\newtheorem{definition}[theorem]{Definition}
\theoremstyle{remark}
\newtheorem*{remark}{Remark}
\newtheorem*{example}{Example}
\newtheorem*{rems}{Remarks}
\newtheorem*{exs}{Examples}
\newenvironment{remarks}{\begin{rems}\begin{enumerate}}{\end{enumerate}\end{rems}}
\newenvironment{examples}{\begin{exs}\begin{enumerate}}{\end{enumerate}\end{exs}}

\newenvironment{alphitems}{\begin{enumerate}[\rm (a)]}{\end{enumerate}}

\theoremstyle{definition}
\newtheorem*{question}{Question}
\newcommand{\AP}{\mathcal{AP}}
\newcommand{\WAP}{\mathcal{WAP}}
\newcommand{\CAP}{\mathcal{CAP}}
\title{Factorization of completely bounded maps \\ through reflexive operator spaces \\ with applications to weak almost periodicity}
\author{\textit{Volker Runde}}
\date{}
\begin{document}
\maketitle
\begin{abstract}
Let $(M,\Gamma)$ be a Hopf--von Neumann algebra, so that $M_\ast$ is a completely contractive Banach algebra. We investigate whether the product of two elements of $M$ that are both weakly almost periodic functionals on $M_\ast$ is again weakly almost periodic. For that purpose, we establish the following factorization result: If $M$ and $N$ are injective von Neumann algebras, and if $\boldsymbol{x}, \boldsymbol{y} \in M \bar{\tensor} N$ correspond to weakly compact operators from $M_\ast$ to $N$ factoring through reflexive operator spaces $X$ and $Y$, respectively, then the operator corresponding to $\boldsymbol{xy}$ factors through the Haagerup tensor product $X \tensor^h Y$ provided that $X \tensor^h Y$ is reflexive. As a consequence, for instance, for any Hopf--von Neumann algebra $(M,\Gamma)$ with $M$ injective, the product of a weakly almost periodic element of $M$ with a completely almost periodic one is again weakly almost periodic.
\end{abstract}
\begin{keywords}
factorization; Haagerup tensor product; Hopf--von Neumann algebra; reflexive operator space; weakly compact map.
\end{keywords}
\begin{classification}
Primary 47L25; Secondary 22D25, 43A30, 46B10, 46B28, 46L06, 46L07, 47B07.
\end{classification}
\section*{Introduction}
Given a locally compact group $G$, a bounded, continuous function on $G$ is called \emph{almost periodic} or \emph{weakly almost periodic}, respectively, if the set of its left translates is relatively norm or weakly compact, respectively, in the space $\mathcal{C}(G)$ of all bounded, continuous functions on $G$. The spaces $\AP(G)$ and $\WAP(G)$ of almost and weakly almost periodic functions, respectively, are well known to be unital $\cstar$-subalgebras of $\mathcal{C}(G)$ (\cite{Bur} or \cite{BJM})).
\par 
The concepts of almost and weak almost periodicity can be dealt with in a more abstract framework. If $A$ is a Banach algebra, its dual space $A^\ast$ is a Banach $A$-bimodule in a canonical fashion, and a functional $\phi \in A^\ast$ is called \emph{almost} or \emph{weakly almost periodic}, respectively, if $\{ a \cdot \phi : a \in A, \, \| a \| \leq 1 \}$ is relatively norm or weakly compact, respectively, in $A^\ast$; the sets of almost and weakly almost functionals of $A$ are closed, linear subspaces of $A^\ast$ and denoted by $\AP(A)$ and $\WAP(A)$, respectively. For $A = L^1(G)$, $\AP(A)$ and $\WAP(G)$ are just $\AP(G)$ and $\WAP(G)$ (\cite{Ulg}).
\par 
If $A$ is Eymard's Fourier algebra $A(G)$ (\cite{Eym}), then $\AP(A)$ and $\WAP(A)$ are commonly denoted by $\AP(\hat{G})$ and $\WAP(\hat{G})$, respectively. It is easy to see that both $\AP(\hat{G})$ and $\WAP(\hat{G})$ are self-adjoint subspaces of $\VN(G)$, the group von Neumann algebra of $G$, containing the identity. But except in a few fairly obvious cases---for abelian $G$ by Pontryagin duality or for discrete and amenable $G$ by \cite[Proposition 3(b)]{Gra}---, it has been unknown to this day whether or not $\AP(\hat{G})$ and $\WAP(\hat{G})$ are $\cstar$-subalgebras of $\VN(G)$.
\par 
There is a common framework, somewhat less general than that of general Banach algebras, to study $\AP(G)$, $\WAP(G)$, $\AP(\hat{G})$, and $\WAP(\hat{G})$, namely that of Hopf--von Neumann algebras (see \cite{ES}, for instance). Given a Hopf--von Neumann algebra $(M,\Gamma)$, the predual $M_\ast$ is canonically equipped with a multiplication turning it into a completely contractive Banach algebra: both $L^1(G)$ and $A(G)$ are Banach algebras arising in this fashion. The question of whether $\AP(\hat{G})$ and $\WAP(\hat{G})$ are $\cstar$-subalgebras of $\VN(G)$ is therefore just a special case of the more general problem whether $\AP(M_\ast)$ and $\WAP(M_\ast)$ are $\cstar$-subalgebras of $M$ for every Hopf--von Neumann algebra $(M,\Gamma)$.
\par 
Recently, some progress was achieved towards a solution of this problem. For instance, M.\ Daws (see \cite{Daw2}) showed that, if $M$ is abelian, then both $\AP(M_\ast)$ and $\WAP(M_\ast)$ are $\cstar$-subalgebras of $M$. Unfortunately, as the author was able to show in \cite{Run1}, the methods used by Daws to prove that $\WAP(M_\ast)$ is a $\cstar$-algebra cannot be extended beyond subhomogeneous von Neumann algebras. Still, Daws' results entail that both $\AP(M(G))$ and $\WAP(M(G))$ are $\cstar$-subalgebras of the commutative von Neumann algebra $\mathcal{C}_0(G)^{\ast\ast}$. Furthermore, in \cite{Run2}, the author used the notion of complete compactness---as introduced by H.\ Saar in \cite{Saa}---to introduce the notion of a \emph{completely almost periodic} functional on a completely contractive Banach algebra; unlike ordinary almost periodicity, complete almost periodicity takes operator space structures into account. The main result of \cite{Run2} asserts that, if $(M,\Gamma)$ is a Hopf--von Neumann algebra such that $M$ is injective, then the space $\CAP(M_\ast)$ of all completely almost periodic functionals on $M_\ast$ is a $\cstar$-subalgebra of $M$.
\par
The present paper is motivated by the question of when, for a Hopf--von Neumann algebra $(M,\Gamma)$, the closed, self-adjoint subspace $\WAP(M_\ast)$ of $M$ is closed under multiplication (and thus a $\cstar$-subalgebra of $M$).
\par
If $M$ and $N$ are von Neumann algebras, then each element $\boldsymbol{x} \in M \bar{\tensor} N$ corresponds in a one-to-one fashion to a completely bounded operator $\mathcal{T}_{M,N}\boldsymbol{x}$ from $M_\ast$ to $N$, namely $N_\ast \ni f \mapsto (f \tensor \id)(\boldsymbol{x})$. For a Hopf--von Neumann algebra $(M,\Gamma)$, it is easy to see that
\[
  \WAP(M_\ast) = \{ x \in M : \text{$\mathcal{T}_{M,M}(\Gamma x)$ is weakly compact} \}.
\]
We are thus interested in whether, for $\boldsymbol{x}, \boldsymbol{y} \in M \bar{\tensor} N$ with $\mathcal{T}_{M,N}\boldsymbol{x}$ and $\mathcal{T}_{M,N}\boldsymbol{y}$ weakly compact, $\mathcal{T}_{M,N}(\boldsymbol{xy})$ is weakly compact. In analogy with the Banach space situation (\cite{Davetal}), a completely bounded map is weakly compact if and only if it factors through a reflexive operator space (\cite{PSch} or \cite{Daw1}). Thus, our main tool for tackling this question is the following factorization result: If $M$ and $N$ are injective von Neumann algebras, and if $\boldsymbol{x},\boldsymbol{y} \in M \bar{\tensor} N$ are such that $\mathcal{T}_{M,N}\boldsymbol{x}$ and $\mathcal{T}_{M,N}\boldsymbol{y}$ factor through operator spaces $X$ and $Y$, respectively, with $X \tensor^h Y$---their Haagerup tensor product---reflexive, then $\mathcal{T}_{M,N}(\boldsymbol{xy})$ factors through $X \tensor^h Y$ (and, consequently, is weakly compact). Even though the Haagerup tensor product of two reflexive operator spaces may well fail to be compact, this allows for some interesting insights
\par 
Applying our findings to weakly almost periodic elements in Hopf--von Neumann algebras, we recover for instance (and even improve slightly) the main result of \cite{Daw2}, and we show that, if $(M,\Gamma)$ is a Hopf--von Neumann algebra with $M$ injective, then $xy, yx \in \WAP(M_\ast)$ for any $x \in \WAP(M_\ast)$ and $y \in \CAP(M_\ast)$.
\subsubsection*{Acknowledgment}
I would like to thank Matt Daws for carefully reading an earlier version of this paper and David Blecher for bringing \cite[Theorem 3.1]{Ble} to my attention.
\section{A factorization result for completely bounded maps}
Our reference for operator spaces is \cite{ER}, the notation of which we adopt; in particular, $\Tensor$ stands for the projective tensor product of operator spaces, not of Banach spaces (see \cite[Chapter 7]{ER}).
\par 
Given two operator spaces $E$ and $F$, there are two canonical ways of looking at the dual of their projective tensor product $E \Tensor F$. On the one hand, we can completely isometrically identify $(E \Tensor F)^\ast$ with $\CB(E,F^\ast)$, the space of all completely bounded maps from $E$ into $F^\ast$ (\cite[Corollary 7.1.5]{ER}). There is, however, another way to describe $(E \Tensor F)^\ast$. 
\par 
Recall that there are are Hilbert space $H$ and $K$ such that $E^\ast$ and $F^\ast$ have dual realizations on $H$ and $K$, respectively (\cite[Proposition 3.2.4]{ER}), i.e., there are weak$^\ast$ continuous complete isometries $E^\ast \hookrightarrow \mathcal{B}(H)$ and $F^\ast \hookrightarrow \mathcal{B}(K)$. The \emph{normal spatial tensor product} $E^\ast \bar{\tensor} F^\ast$ of $E^\ast$ and $F^\ast$ is defined as the weak$^\ast$ closure of the algebraic tensor product $E \tensor F$ in $\mathcal{B}(H \tensor_2 K)$, where $\tensor_2$ denotes the Hilbert space tensor product (\cite[p.\ 134]{ER}). Note that $\mathcal{B}(H) \bar{\tensor} \mathcal{B}(K) = \mathcal{B}(H \tensor_2 K)$. Given $\nu \in \mathcal{B}(H)_\ast$, the predual of $\mathcal{B}(H)$, we denote by $\nu \tensor \id$, the corresponding \emph{Tomiyama slice map}, i.e., the unique weak$^\ast$-weak$^\ast$ continuous extension of $\mathcal{B}(H) \tensor \mathcal{B}(H) \ni x \tensor y \mapsto \langle \nu, x \rangle y$; similarly, $\id \tensor \omega$ is defined for $\omega \in \mathcal{B}(K)_\ast$. The \emph{normal Fubini tensor product} of $E^\ast$ and $F^\ast$ (see \cite[p.\ 134]{ER}) is defined to be
\begin{multline*}
  E^\ast \bar{\tensor}_\mathcal{F} F^\ast := \{ \boldsymbol{t} \in \mathcal{B}(H \tensor_2 K) : \text{$(\nu \tensor \id)(\boldsymbol{t}) \in F^\ast$ and $(\id \tensor \omega)(\boldsymbol{t}) \in F^\ast$} \\ \text{for all $\nu \in \mathcal{B}(H)_\ast$ and $\omega \in \mathcal{B}(K)_\ast$} \}.
\end{multline*}
By \cite[Theorem 7.2.3]{ER}, we have a canonical completely isometric isomorphism between $(E \Tensor F)^\ast$ and $E^\ast \bar{\tensor}_\mathcal{F} F^\ast$, so that, in particular, $E^\ast \bar{\tensor}_\mathcal{F} F^\ast$ does not depend on the particular dual realizations of $E^\ast$ and $F^\ast$, respectively (as is the case for $E^\ast \bar{\tensor} F^\ast$ by \cite[Proposition 8.1.8]{ER}).
\par 
In view of the two ways to realize $(E \Tensor F)^\ast$, we thus have a completely isometric isomorphism $\mathcal{T}_{E^\ast,F^\ast} \!: E^\ast \bar{\tensor}_\mathcal{F} F^\ast \to \CB(E,F^\ast)$, given by
\begin{equation} \label{firstiso}
  (\mathcal{T}_{E^\ast,F^\ast} \boldsymbol{t})x = (x \tensor \id)(\boldsymbol{t}) \qquad (\boldsymbol{t} \in E^\ast \bar{\tensor}_\mathcal{F} F^\ast, \, x \in E). 
\end{equation}
\par 
Clearly, $E^\ast \bar{\tensor} F^\ast$ is a closed subspace of $E^\ast \bar{\tensor}_\mathcal{F} F^\ast$, and both spaces coincide, for instance, if both $E^\ast$ and $F^\ast$ are von Neumann algebras (this follows from \cite[Theorem 7.2.4]{ER}). In general, $E^\ast \bar{\tensor} F^\ast$ may be a proper subspace of $E^\ast \bar{\tensor}_\mathcal{F} F^\ast$---even if one of $E^\ast$ and $F^\ast$ is a von Neumann algebra (\cite[Theorem 3.3]{Kra2}). Following \cite{Kra1}, we say that $E^\ast$ has \emph{property $S_\sigma$} if $E^\ast \bar{\tensor} F^\ast = E^\ast \bar{\tensor}_\mathcal{F} F^\ast$ for any choice of $F$. Injective von Neumann algebras, for instance, have property $S_\sigma$ (\cite[Theorem 1.9]{Kra1}).
\par 
The \emph{Haagerup tensor product} $\tensor^h$ is defined and discussed in \cite[Chapter 9]{ER}. For the related notions of the \emph{extended Haagerup product} $\tensor^{eh}$ and the \emph{normal Haagerup tensor product} $\tensor^{\sigma h}$---introduced in \cite{EK}---, see \cite{ERJoT}. In \cite[Theorem 6.1]{ERJoT}, the authors relate $\bar{\tensor}$ and $\tensor^{\sigma h}$ by showing that, for any operator spaces $E_1$, $F_1$, $E_2$, and $F_2$ the shuffle map
\[
  \mathcal{S} \!: (E_1^\ast \tensor F^\ast_1) \tensor (E_2^\ast \tensor F_2^\ast) \to (E_1^\ast \tensor E_2^\ast) \tensor (F_1^\ast \tensor F_2^\ast) 
\]
has---necessarily unique---weak$^\ast$-weak$^\ast$ continuous, completely contractive extension
\[
  \mathcal{S}_\sigma \!: (E_1^\ast \bar{\tensor} F^\ast_1) \tensor^{\sigma h} (E_2^\ast \bar{\tensor} F_2^\ast) \to (E_1^\ast \tensor^{\sigma h} E_2^\ast) \bar{\tensor} (F_1^\ast \tensor^{\sigma h} F_2^\ast) 
\]
\par
Before we can finally state the main result of this section, we introduce another convention: given operator spaces $E$, $F$, and $X$, we say that $T \in \CB(E,F)$ \emph{factors completely boundedly through $X$} if there are $R \in \CB(E,X)$ and $S \in \CB(X,F)$ such that $T = SR$. (For the sake of brevity, we will sometimes drop the words ``completely boundedly'' if no confusion can arise.)
\begin{theorem} \label{mainfactorthm}
Let $E_1$, $F_1$, $X_1$, $E_2$, $F_2$, and $X_2$ be operator spaces such that:
\begin{alphitems}
\item $E_1^\ast$, $F_1^\ast$, $E_2^\ast$, and $F_2^\ast$ have property $S_\sigma$;
\item $X_1 \tensor^h X_2$ is reflexive.
\end{alphitems}
Furthermore, let $\boldsymbol{t}_j \in E^\ast_j \bar{\tensor} F^\ast_j$ be such that $\mathcal{T}_{E_j^\ast, F_j^\ast} \boldsymbol{t}_j$ factors completely boundedly through $X_j$ for $j=1,2$. Then 
\begin{equation} \label{monster}
  \mathcal{T}_{E_1^\ast \tensor^{\sigma h} E_2^\ast, F_1^\ast \tensor^{\sigma h} F_2^\ast} (\mathcal{S}_\sigma(\boldsymbol{t}_1 \tensor \boldsymbol{t}_2)) \in \CB(E_1 \tensor^{eh} E_2, F^\ast_1 \tensor^{\sigma h} F_2^\ast)
\end{equation}
factors completely boundedly through $X_1 \tensor^h X_2$.
\end{theorem}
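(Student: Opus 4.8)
The plan is to obtain the factorization by tensoring together factorizations of the two maps $\mathcal{T}_{E_j^\ast,F_j^\ast}\boldsymbol{t}_j$, and then to use the reflexivity hypothesis (b) in order to collapse the extended Haagerup tensor product of $X_1$ and $X_2$ onto their ordinary Haagerup tensor product. Fix factorizations $\mathcal{T}_{E_j^\ast,F_j^\ast}\boldsymbol{t}_j = S_j R_j$ with $R_j \in \CB(E_j,X_j)$ and $S_j \in \CB(X_j,F_j^\ast)$ for $j=1,2$; property $S_\sigma$ is used here to identify $E_j^\ast \bar{\tensor} F_j^\ast = E_j^\ast \bar{\tensor}_{\mathcal F} F_j^\ast$ completely isometrically with $\CB(E_j,F_j^\ast)$ via $\mathcal{T}_{E_j^\ast,F_j^\ast}$, so that $\boldsymbol{t}_j$ and its factoring map carry the same information. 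By the functoriality of the Haagerup, extended Haagerup and normal Haagerup tensor products (\cite[Chapter 9]{ER}, \cite{ERJoT}), the pair $(R_1,R_2)$ induces a completely bounded map $R_1 \tensor^{eh} R_2 \colon E_1 \tensor^{eh} E_2 \to X_1 \tensor^{eh} X_2$ extending $R_1 \tensor R_2$, while $(S_1,S_2)$ induces $S_1 \tensor^h S_2 \colon X_1 \tensor^h X_2 \to F_1^\ast \tensor^h F_2^\ast$; composing the latter with the canonical complete contraction $F_1^\ast \tensor^h F_2^\ast \to F_1^\ast \tensor^{\sigma h} F_2^\ast$ gives $\Sigma \in \CB(X_1 \tensor^h X_2, F_1^\ast \tensor^{\sigma h} F_2^\ast)$.

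The obstacle---and the only place hypothesis (b) is needed---is the mismatch between $\tensor^{eh}$ and $\tensor^h$: in general $X_1 \tensor^h X_2$ sits inside $X_1 \tensor^{eh} X_2$ as a proper completely isometric subspace, so $R_1 \tensor^{eh} R_2$ need not take its values in $X_1 \tensor^h X_2$ and cannot simply be composed with $\Sigma$. This is exactly where \cite[Theorem 3.1]{Ble} is applied: the reflexivity of $X_1 \tensor^h X_2$ forces $X_1 \tensor^{eh} X_2 = X_1 \tensor^h X_2$ completely isometrically. Consequently $R_1 \tensor^{eh} R_2$ does map into $X_1 \tensor^h X_2$, and by construction
\[
  \Sigma \circ (R_1 \tensor^{eh} R_2) \in \CB(E_1 \tensor^{eh} E_2, F_1^\ast \tensor^{\sigma h} F_2^\ast)
\]
factors completely boundedly through the reflexive operator space $X_1 \tensor^h X_2$.

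It remains to identify this composite with the operator in \eqref{monster}. The key observation is that, under the identification \eqref{firstiso} and the shuffle identification of \cite[Theorem 6.1]{ERJoT}, the operator in \eqref{monster} is precisely the tensor product of the completely bounded maps $\mathcal{T}_{E_1^\ast,F_1^\ast}\boldsymbol{t}_1$ and $\mathcal{T}_{E_2^\ast,F_2^\ast}\boldsymbol{t}_2$, taken with respect to the domains $\tensor^{eh}$ and the codomains $\tensor^{\sigma h}$: on an elementary tensor $e_1 \tensor e_2$ one computes directly from the definitions of $\mathcal{S}$ and \eqref{firstiso} that it equals $(\mathcal{T}_{E_1^\ast,F_1^\ast}\boldsymbol{t}_1)e_1 \tensor (\mathcal{T}_{E_2^\ast,F_2^\ast}\boldsymbol{t}_2)e_2$, and this determines the operator by the weak$^\ast$-continuity built into $\mathcal{S}_\sigma$ and into the Tomiyama slice maps. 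A second use of property $S_\sigma$ enters here: it ensures that the normal spatial tensor product $(E_1^\ast \tensor^{\sigma h} E_2^\ast) \bar{\tensor} (F_1^\ast \tensor^{\sigma h} F_2^\ast)$, in which $\mathcal{S}_\sigma(\boldsymbol{t}_1 \tensor \boldsymbol{t}_2)$ lives, agrees with the corresponding normal Fubini product, which is what makes those slice maps available and weak$^\ast$-continuous. Granting this, the functoriality of the tensor product of completely bounded maps (composition commutes with tensoring) gives
\[
  \bigl(\mathcal{T}_{E_1^\ast,F_1^\ast}\boldsymbol{t}_1\bigr) \tensor \bigl(\mathcal{T}_{E_2^\ast,F_2^\ast}\boldsymbol{t}_2\bigr) = (S_1 R_1) \tensor (S_2 R_2) = \Sigma \circ (R_1 \tensor^{eh} R_2),
\]
which is exactly the desired factorization of the operator in \eqref{monster} through $X_1 \tensor^h X_2$.

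The only genuine difficulty is the $\tensor^{eh}$-versus-$\tensor^h$ discrepancy in the second paragraph, resolved by the reflexivity hypothesis (b) together with \cite[Theorem 3.1]{Ble}; the rest is a functoriality argument, with the remaining effort going into the bookkeeping---controlled by property $S_\sigma$ and the weak$^\ast$-continuity of $\mathcal{S}_\sigma$ and the slice maps---needed to recognise \eqref{monster} as a tensor product of completely bounded maps.
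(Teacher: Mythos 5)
Your construction of the candidate factorization is sound and is close in spirit to the paper's: functoriality of $\tensor^h$ and $\tensor^{eh}$, together with the collapse $X_1 \tensor^{eh} X_2 = X_1 \tensor^h X_2$ forced by the reflexivity of $X_1 \tensor^h X_2$ (this is Lemma \ref{hrefl} of the paper, proved from \cite[Theorem 5.3]{ERJoT}; it is not \cite[Theorem 3.1]{Ble}, which concerns minimal/maximal operator spaces), does yield a completely bounded map $\Sigma \circ (R_1 \tensor^{eh} R_2) \in \CB(E_1 \tensor^{eh} E_2, F_1^\ast \tensor^{\sigma h} F_2^\ast)$ factoring through $X_1 \tensor^h X_2$. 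The genuine gap is the identification of this composite with the operator (\ref{monster}). You check that the two maps agree on elementary tensors $e_1 \tensor e_2$ and then assert that this ``determines the operator by the weak$^\ast$-continuity built into $\mathcal{S}_\sigma$ and into the Tomiyama slice maps.'' That does not follow: $E_1 \tensor E_2$ is norm dense only in $E_1 \tensor^h E_2$, which is in general a \emph{proper} subspace of the domain $E_1 \tensor^{eh} E_2$, and the weak$^\ast$-continuity of $\mathcal{S}_\sigma$ and of the slice maps is continuity in the variable $\boldsymbol{t}_1 \tensor \boldsymbol{t}_2$, not in the domain variable $x \in E_1 \tensor^{eh} E_2$. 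To conclude equality you would need both maps to be continuous for a topology on $E_1 \tensor^{eh} E_2$ in which $E_1 \tensor E_2$ is dense (for instance the weak topology induced by the pairing with $E_1^\ast \tensor^h E_2^\ast$), with values in the weak$^\ast$ topology of $F_1^\ast \tensor^{\sigma h} F_2^\ast$; this is not automatic for $\Sigma \circ (R_1 \tensor^{eh} R_2)$, and establishing it is essentially the content of the paper's commuting-diagram argument, where the separately weak$^\ast$-weak$^\ast$ continuous composition maps $\mu_1$, $\mu_2$, $\mu_\sigma$ (whose continuity uses the reflexivity of $X_1$, $X_2$, and $X_1 \tensor^h X_2$) allow passage from algebraic tensors---which \emph{are} weak$^\ast$ dense in the normal spatial tensor products---to general elements.

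Related to this, your account of hypothesis (a) is misplaced. You invoke property $S_\sigma$ to identify $E_j^\ast \bar{\tensor} F_j^\ast$ with $\CB(E_j, F_j^\ast)$ and to compare the ambient $\bar{\tensor}$ with a Fubini product; neither is where the hypothesis does its work ($\boldsymbol{t}_j$ is already given in $E_j^\ast \bar{\tensor} F_j^\ast$, and $\mathcal{S}_\sigma$ takes values in the normal spatial tensor product by \cite[Theorem 6.1]{ERJoT} without any $S_\sigma$ assumption). In the paper, property $S_\sigma$ is needed precisely to represent the factoring maps $R_j \in \CB(E_j, X_j)$ and $S_j \in \CB(X_j, F_j^\ast)$ by elements $\boldsymbol{r}_j \in E_j^\ast \bar{\tensor} X_j$ and $\boldsymbol{s}_j \in X_j^\ast \bar{\tensor} F_j^\ast$ of the normal \emph{spatial} (not merely Fubini) tensor products, so that the shuffle map can be applied to $\boldsymbol{r}_1 \tensor \boldsymbol{r}_2$ and $\boldsymbol{s}_1 \tensor \boldsymbol{s}_2$. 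Your argument never passes to tensor-element representations of $R_j$ and $S_j$, and hence never uses (a) at all---a symptom of the missing identification step rather than a genuine strengthening of the theorem.
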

\par 
Before we start proving Theorem \ref{mainfactorthm}, we would like to comment on our choice of hypotheses, especially Theorem \ref{mainfactorthm}(b). 
\par 
The reflexivity of $X_1 \tensor^h X_2$ forces both $X_1$ and $X_2$ to be reflexive because $X_1 \tensor^h X_2$ contains isomorphic copies of both $X_1$ and $X_2$. Consequently, $\mathcal{T}_{E_j^\ast, F_j^\ast} \boldsymbol{t}_j$ is weakly compact for $j=1,2$ as is (\ref{monster}). 
\par 
It is a classical result---from \cite{Davetal}---that every weakly compact operator between Banach spaces factors through a reflexive Banach space. The analogous statement is true in the category of operator spaces (\cite{PSch}): any completely bounded, weakly compact map between operator spaces factors completely boundedly through a reflexive operator space. (Apparently without knowledge of \cite{PSch}, Daws discovered a similar result, which provides better norm estimates and also takes module structures into account; see \cite[Theorem 4.4]{Daw1}.) 
\par 
In view of this, Theorem \ref{mainfactorthm} would be much more attractive if $X_1$ and $X_2$ being reflexive entailed the reflexivity of $X_1 \tensor^h X_2$; in certain cases, this is indeed true, but not always:
\begin{examples}
\item Suppose that $X_1$ and $X_2$ are reflexive with one of them finite-dimensional. Then $X_1 \tensor^h X_2$ is trivially reflexive.
\item Suppose that $X_1$ is a minimal and that $X_2$ is a maximal operator space. Then $X_1 \tensor^h X_2$ is reflexive by \cite[Theorem 3.1(v)]{Ble} and \cite{AS}. Similarly, $X_1 \tensor^h X_2$ is also reflexive if $X_1$ is maximal and $X_2$ is minimal.
\item More generally, suppose that $X_1$ and $X_2$ are reflexive, $X_1$ is minimal on its rows, and $X_2$ is maximal on its columns, i.e.,
\begin{multline*}
  \| x_1 \|_{M_{1,n}(X_1)} = \| x \|_{M_{1,n}(\min X_1)} \\ \quad\text{and}\quad \| x_2 \|_{M_{n,1}(X_2)} = \| x \|_{M_{n,1}(\max X_2)} \qquad (n \in \posints, x_1 \in M_n(X_1), \, x_2 \in X_2).
\end{multline*}
As only the rows of $X_1$ and the columns of $X_2$ are relevant for the definition of $X_1 \tensor^h X_2$ at the Banach space level, the previous example yields the reflexivity of $X_1 \tensor^h X_2$. In \cite{Lam} (see also \cite{LNR}), A.\ Lambert defined an operator space---called \emph{column operator space} in \cite{Lam}---over an arbitrary Banach space which is indeed maximal on its columns and minimal on its rows (for Hilbert spaces, this is just the usual column Hilbert space by \cite{Mat}). Similarly, $X_1 \tensor^h X_2$ is also reflexive if $X_1$ and $X_2$ are reflexive with $X_1$ maximal on its rows and $X_2$ minimal on its columns.
\item Let $H$ and $K$ be Hilbert spaces, and suppose that $X_1 = H_c$, i.e., column Hilbert space over $H$, and $X_2 = (K_c)^\ast$. Then $X_1$ and $X_2$ are obviously reflexive whereas $X_1 \tensor^h X_2 \cong \mathcal{K}(K,H)$ (\cite[Proposition 9.3.4]{ER}), i.e., the compact operators from $K$ to $H$ isn't unless $X_1$ or $X_2$ is finite-dimensional.
\end{examples}
\par 
As we just noted, $X_1 \tensor^h X_2$ need not be reflexive, even if both $X_1$ and $X_2$ are. \emph{If} they are reflexive, however, we have:
\begin{lemma} \label{hrefl}
Let $X_1$ and $X_2$ be operator spaces such that $X_1 \tensor^h X_2$ is reflexive. Then the canonical completely isometric maps
\[
  X_1 \tensor^h X_2 \hookrightarrow X_1 \tensor^{eh} X_2 \hookrightarrow X_1 \tensor^{\sigma h} X_2
\]
are onto, and there is a canonical completely isometric isomorphism between $(X_1 \tensor^h X_2)^\ast$ and $X_1^\ast \tensor^h X_2^\ast$.
\end{lemma}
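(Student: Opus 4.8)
The plan is to deduce both assertions from the known complete isometry $(V \tensor^h W)^\ast \cong V^\ast \tensor^{\sigma h} W^\ast$, valid for arbitrary operator spaces $V$ and $W$ (\cite{ERJoT}), together with a Krein--Smulian argument. As already observed in the commentary preceding the lemma, the reflexivity of $X_1 \tensor^h X_2$ forces both $X_1$ and $X_2$ to be reflexive. Applying the duality above to $V = X_1^\ast$ and $W = X_2^\ast$ and invoking $X_j^{\ast\ast} = X_j$, we obtain a canonical completely isometric isomorphism
\[
  \Phi \!: (X_1^\ast \tensor^h X_2^\ast)^\ast \longrightarrow X_1 \tensor^{\sigma h} X_2 ,
\]
and one checks that, through $\Phi$, the canonical embedding of $X_1 \tensor^h X_2$ into the dual space $(X_1^\ast \tensor^h X_2^\ast)^\ast$ corresponds to the canonical completely isometric inclusion $X_1 \tensor^h X_2 \hookrightarrow X_1 \tensor^{\sigma h} X_2$, both maps restricting to the obvious assignment on the algebraic tensor product $X_1 \tensor X_2$.

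First I would prove that the inclusion $X_1 \tensor^h X_2 \hookrightarrow X_1 \tensor^{\sigma h} X_2$ is onto. Set $W := X_1 \tensor^h X_2$ and regard it, via $\Phi^{-1}$, as a norm-closed subspace of the dual Banach space $V^\ast := (X_1^\ast \tensor^h X_2^\ast)^\ast$. Since $X_1 \tensor X_2$ is weak$^\ast$ dense in $X_1 \tensor^{\sigma h} X_2$ (by construction of the normal Haagerup tensor product), $W$ is weak$^\ast$ dense in $V^\ast$. On the other hand $W$ is reflexive, so its closed unit ball $B_W$ is weakly compact in $W$; because the inclusion $W \hookrightarrow V^\ast$ is continuous from the weak topology of $W$ to the weak$^\ast$ topology of $V^\ast$ (every $v \in V$ induces, by restricting its canonical image in $V^{\ast\ast}$, a continuous functional on $W$), the set $B_W$ is weak$^\ast$ compact, hence weak$^\ast$ closed, in $V^\ast$. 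As $W \cap (r B_{V^\ast}) = r B_W$ for every $r > 0$, the Krein--Smulian theorem shows that $W$ is weak$^\ast$ closed in $V^\ast$; being also weak$^\ast$ dense, $W = V^\ast$. Hence $X_1 \tensor^h X_2 = X_1 \tensor^{\sigma h} X_2$ via the canonical map, and since $X_1 \tensor^{eh} X_2$ sits between these two spaces, $X_1 \tensor^h X_2 \to X_1 \tensor^{eh} X_2$ is onto as well. This settles the first assertion.

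For the second assertion, $X_1^\ast \tensor^h X_2^\ast$ is now a predual of the reflexive space $X_1 \tensor^{\sigma h} X_2 = X_1 \tensor^h X_2$, hence is itself reflexive (a Banach space is reflexive precisely when its dual is). Dualizing $\Phi$ and using that an operator space embeds completely isometrically into its bidual, we obtain
\[
  (X_1 \tensor^h X_2)^\ast \cong (X_1^\ast \tensor^h X_2^\ast)^{\ast\ast} \cong X_1^\ast \tensor^h X_2^\ast
\]
completely isometrically, which is exactly the second assertion. The heart of the proof is the surjectivity step, and the key point there is the essentially Krein--Smulian observation that a reflexive subspace of a dual Banach space is automatically weak$^\ast$ closed, so that a reflexive \emph{weak$^\ast$ dense} subspace of a dual space must be everything. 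By comparison, the two facts quoted as black boxes---the duality $(V \tensor^h W)^\ast = V^\ast \tensor^{\sigma h} W^\ast$ and the weak$^\ast$ density of $X_1 \tensor X_2$ in $X_1 \tensor^{\sigma h} X_2$---are standard, and the compatibility of the various canonical maps (needed to identify $\Phi$ with the inclusion on $X_1 \tensor X_2$) is a routine diagram chase.
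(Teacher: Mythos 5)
There is a genuine gap, and it sits in your one black box. The duality you quote as known --- a complete isometry $(V \tensor^h W)^\ast \cong V^\ast \tensor^{\sigma h} W^\ast$ for arbitrary operator spaces --- is not the theorem in \cite{ERJoT}. The correct statement (\cite[Theorem 5.3]{ERJoT}) is $(V \tensor^h W)^\ast \cong V^\ast \tensor^{eh} W^\ast$, while the normal Haagerup tensor product is \emph{defined} by $V^\ast \tensor^{\sigma h} W^\ast = (V \tensor^{eh} W)^\ast$; the latter contains $V^\ast \tensor^{eh} W^\ast$ completely isometrically but is in general strictly larger --- indeed, if the three tensor products always coincided, the lemma would have no content. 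In your application, with $V = X_1^\ast$, $W = X_2^\ast$ and $X_1$, $X_2$ reflexive, the true duality gives $(X_1^\ast \tensor^h X_2^\ast)^\ast \cong X_1 \tensor^{eh} X_2$, whereas $X_1 \tensor^{\sigma h} X_2 = (X_1^\ast \tensor^{eh} X_2^\ast)^\ast$; so the identification $\Phi$ you postulate is equivalent to $X_1^\ast \tensor^h X_2^\ast = X_1^\ast \tensor^{eh} X_2^\ast$, which is precisely an instance of the lemma's own conclusion applied to the dual spaces. You cannot quote it, and the reflexivity of $X_1^\ast \tensor^h X_2^\ast$ that would be needed to prove it is something you only derive later, using $\Phi$. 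As written, the argument is circular.

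The good news is that your Krein--Smulian mechanism is sound and the proof can be repaired: using the correct duality, it shows that the reflexive, weak$^\ast$ dense subspace $X_1 \tensor^h X_2$ of $(X_1^\ast \tensor^h X_2^\ast)^\ast \cong X_1 \tensor^{eh} X_2$ is everything; then $X_1^\ast \tensor^h X_2^\ast$ is reflexive (its dual is), the same argument applied to $X_1^\ast$, $X_2^\ast$ yields $X_1^\ast \tensor^h X_2^\ast = X_1^\ast \tensor^{eh} X_2^\ast = (X_1 \tensor^h X_2)^\ast$, and finally $X_1 \tensor^{\sigma h} X_2 = (X_1^\ast \tensor^{eh} X_2^\ast)^\ast = (X_1 \tensor^h X_2)^{\ast\ast} = X_1 \tensor^h X_2$. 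Note, however, that this last chain also shows the Krein--Smulian step is avoidable: the paper's proof simply observes that, by \cite[Theorem 5.3]{ERJoT} and the definition of $\tensor^{\sigma h}$, the embedding $X_1 \tensor^h X_2 \hookrightarrow X_1 \tensor^{\sigma h} X_2$ \emph{is} the canonical embedding into the bidual, so reflexivity gives surjectivity at once, the $\tensor^{eh}$ space being sandwiched in between; the statement about $(X_1 \tensor^h X_2)^\ast$ then follows by passing to duals exactly as above.
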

\begin{proof}
From \cite[Theorem 5.3]{ERJoT} and the definition of $\tensor^{\sigma h}$, it follows that $X_1 \tensor^h X_2 \hookrightarrow X_1 \tensor^{\sigma h} X_2$ is nothing but the canonical embedding of $X_1 \tensor^h X_2$ into its second dual. The reflexivity of $X_1 \tensor^h X_2$ thus yields that $X_1 \tensor^h X_2 \hookrightarrow X_1 \tensor^{\sigma h} X_2$ is onto. Consequently, $X_1 \tensor^h X_2 \hookrightarrow X_1 \tensor^{eh} X_2$ then also has to be onto.
\par
If $X_1 \tensor^h X_2$ is reflexive, then so is its dual---by \cite[Theorem 5.3]{ERJoT}---$X_1^\ast \tensor^{eh} X_2^\ast$. Since $X_1^\ast \tensor^h X_2^\ast$ embeds canonically into $X_1^\ast \tensor^{eh} X_2^\ast$, this means that $X_1^\ast \tensor^h X_2^\ast$ must also be reflexive, so that $(X_1 \tensor^h X_2)^\ast \cong X_1^\ast \tensor^{eh} X_2^\ast = X_1^\ast \tensor^h X_2^\ast$ because $X_1^\ast \tensor^h X_2^\ast \hookrightarrow X_1^\ast \tensor^{eh} X_2^\ast$ has to be onto by the first part of the lemma.
\end{proof}
\begin{proof}[Proof of Theorem \emph{\ref{mainfactorthm}}]
As $\mathcal{T}_{E_j^\ast, F_j^\ast} \boldsymbol{t}_j$ factors completely boundedly through $X_j$ for $j=1,2$, there are $\boldsymbol{r}_j \in E_j^\ast \bar{\tensor}_\mathcal{F} X_j$ and $\boldsymbol{s}_j \in X_j^\ast \bar{\tensor}_\mathcal{F} F_j^\ast$ such that
\[
  \mathcal{T}_{E^\ast_j,F_j^\ast} \boldsymbol{t}_j = (\mathcal{T}_{X_j^\ast,F^\ast_j} \boldsymbol{s}_j)(\mathcal{T}_{E_j^\ast,X_j} \boldsymbol{r}_j) \qquad (j =1,2).
\]
Note that, by hypothesis (a), we have $E_j^\ast \bar{\tensor}_\mathcal{F} X_j = E_j^\ast \bar{\tensor} X_j$ and $X_j^\ast \bar{\tensor}_\mathcal{F} F_j^\ast = X_j^\ast \bar{\tensor} F_j^\ast$, so that, in fact, $\boldsymbol{r}_j \in E_j^\ast \bar{\tensor} X_j$ and $\boldsymbol{s}_j \in X_j^\ast \bar{\tensor} F_j^\ast$ for $j=1,2$. We claim that
\begin{multline*}
 \mathcal{T}_{E_1^\ast \tensor^{\sigma h} E_2^\ast, F_1^\ast \tensor^{\sigma h} F_2^\ast} (\mathcal{S}_\sigma(\boldsymbol{t}_1 \tensor \boldsymbol{t}_2)) \\ 
 = \mathcal{T}_{X_1^\ast \tensor^h X_2^\ast, F_1^\ast \tensor^{\sigma h} F_2^\ast} (\mathcal{S}_\sigma(\boldsymbol{s}_1 \tensor \boldsymbol{s}_2)) \mathcal{T}_{E_1^\ast \tensor^{\sigma h} E_2^\ast, X_1 \tensor^h X_2} (\mathcal{S}_\sigma(\boldsymbol{r}_1 \tensor \boldsymbol{r}_2))
\end{multline*}
\par 
Define a bilinear map
\[
  \mu_1 \!: (E_1^\ast \tensor X_1) \times (X_1^\ast \tensor F^\ast_1) \to E_1^\ast \tensor F_1^\ast, \quad
  ((e^\ast \tensor x), (x^\ast \tensor f^\ast)) \mapsto \langle x, x^\ast \rangle e^\ast \tensor f^\ast.
\]
It is immediate that
\begin{equation} \label{mu1eq}
  \mathcal{T}_{E_1^\ast, F_1^\ast} (\mu_1(\boldsymbol{r}, \boldsymbol{s})) = (\mathcal{T}_{X_1^\ast,F_1^\ast} \boldsymbol{s}) (\mathcal{T}_{E_1^\ast,X_1} \boldsymbol{r}) 
\end{equation}
for all $\boldsymbol{s} \in X_1^\ast \tensor F_1^\ast$ and $\boldsymbol{r} \in E_1^\ast \tensor X_1$. As the composition map from $\CB(E_1,X_1) \times \CB(X_1,F_1^\ast)$ into $\CB(E_1,F_1^\ast)$ is separately weak$^\ast$-weak$^\ast$ continuous---due to the reflexivity of $X_1$---, we see that $\mu_1$ has---necessarily unique---separately weak$^\ast$-weak$^\ast$ continuous extension from $(E_1^\ast \bar{\tensor} X_1) \times (X_1^\ast \bar{\tensor} F^\ast_1)$ to $E_1^\ast \bar{\tensor} F_1^\ast$, which we also denote by $\mu_1$. Clearly, (\ref{mu1eq}) then also holds for all $\boldsymbol{s} \in X_1^\ast \bar{\tensor} F_1^\ast$ and $\boldsymbol{r} \in E_1^\ast \bar{\tensor} X_1$. Analogously, we define
\[
  \mu_2 \!: (E_2^\ast \bar{\tensor} X_2) \times (X_2^\ast \bar{\tensor} F^\ast_2) \to E_2^\ast \bar{\tensor} F_2^\ast
\]
and 
\[
  \mu_\sigma \!: ((E_1^\ast \tensor^{\sigma h} E_2^\ast) \bar{\tensor} (X_1 \tensor^h X_2)) \times ((X_1^\ast \tensor^h X_2^\ast) \bar{\tensor} (F_1^\ast \tensor^{\sigma h} F^\ast_2)) \to (E_1^\ast \tensor^{\sigma h} E_2^\ast) \bar{\tensor} (F_1^\ast \tensor^{\sigma h} F_2^\ast).
\]
We are done if the diagram
\begin{small}
\begin{equation} \label{diagram}
  \begin{diagram}
  (E_1^\ast \bar{\tensor} X_1) \times (X_1^\ast \bar{\tensor} F^\ast_1) \times (E_2^\ast \bar{\tensor} X_2) \times (X_2^\ast \bar{\tensor} F^\ast_2)
  & \rTo^{\mu_1 \times \mu_2} & (E_1^\ast \bar{\tensor} F_1^\ast) \tensor^{\sigma h} (E_2^\ast \bar{\tensor} F_2^\ast) \\
  \dTo & & \\
  ((E_1^\ast \bar{\tensor} X_1) \tensor^{\sigma h} (E_2^\ast \bar{\tensor} X_2)) \times ((X_1^\ast \bar{\tensor} F_1^\ast) \tensor^{\sigma h} (X_2^\ast \bar{\tensor} F^\ast_2)) & & \dTo_{\mathcal{S}_\sigma} \\
  \dTo^{\mathcal{S}_\sigma \times \mathcal{S}_\sigma} & &  \\ 
  ((E_1^\ast \tensor^{\sigma h} E_2^\ast) \bar{\tensor} (X_1 \tensor^h X_2)) \times ((X_1^\ast \tensor^h X_2^\ast) \bar{\tensor} (F_1^\ast \tensor^{\sigma h} F_2^\ast)) & \rTo_{\mu_\sigma} &
  (E_1^\ast \tensor^{\sigma h} E_2^\ast) \bar{\tensor} (F_1^\ast \tensor^{\sigma h} F_2^\ast)
  \end{diagram}
\end{equation}
\end{small}
commutes. 
\par 
From the definitions of the maps involved, it is immediate that (\ref{diagram}) commutes if restricted to $(E_1^\ast \tensor X_1) \times (X_1^\ast \tensor F^\ast_1) \times (E_2^\ast \tensor X_2)$. The commutativity of (\ref{diagram}) the follows from the (separate) weak$^\ast$-weak$^\ast$ continuity of the maps in (\ref{diagram})
\end{proof}
\section{Elements of von Neumann algebra tensor products corresponding to weakly compact operators}
Let $M$ and $N$ be von Neumann algebras. We set
\[
  \mathcal{W}(M \bar{\tensor} N) := \{ \boldsymbol{x} \in M \bar{\tensor} N : \text{$\mathcal{T}_{M,N} \boldsymbol{x} \in \CB(M_\ast, N)$ is weakly compact} \}.
\]
It is easy to see that $\mathcal{W}(M \bar{\tensor} N)$ is a closed, self-adjoint subspace of $M \bar{\tensor} N$ containing the identity. We are interested in the question if $\mathcal{W}(M \bar{\tensor} N)$ a $\cstar$-subalgebra of $M \bar{\tensor} N$. Of course, all that needs verification is whether $\mathcal{W}(M \bar{\tensor} N)$ is multiplicatively closed, i.e., if $\boldsymbol{x}, \boldsymbol{y} \in \mathcal{W}(M \bar{\tensor} N)$, is then $\boldsymbol{xy} \in \mathcal{W}(M \bar{\tensor} N)$? 
\par 
We cannot answer this question in general, but we shall obtain some partial results as applications of the following theorem, which is a consequence of Theorem \ref{mainfactorthm}:
\begin{theorem} \label{vNthm}
Let $M$ and $N$ be injective von Neumann algebras, let $X$ and $Y$ be operator spaces such that $X \tensor^h Y$ is reflexive, and let $\boldsymbol{x}, \boldsymbol{y} \in M \bar{\tensor} N$ be such that:
\begin{alphitems}
\item $\mathcal{T}_{M,N} \boldsymbol{x}$ factors completely boundedly through $X$;
\item $\mathcal{T}_{M,N} \boldsymbol{y}$ factors completely boundedly through $Y$.
\end{alphitems}
Then $\mathcal{T}_{M,N}(\boldsymbol{xy})$ factors completely boundedly through $X \tensor^h Y$.
\end{theorem}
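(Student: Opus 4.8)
The plan is to obtain Theorem~\ref{vNthm} directly from Theorem~\ref{mainfactorthm} by specializing $E_1 = E_2 = M_\ast$ and $F_1 = F_2 = N_\ast$ (so that $E_j^\ast = M$ and $F_j^\ast = N$), together with $X_1 = X$ and $X_2 = Y$. Hypothesis~(a) of Theorem~\ref{mainfactorthm} then holds because injective von Neumann algebras have property $S_\sigma$ --- this is the one and only place where the injectivity of $M$ and $N$ enters --- and hypothesis~(b) is precisely the standing assumption that $X \tensor^h Y$ is reflexive. Theorem~\ref{mainfactorthm} consequently tells us that the operator
\[
  \Phi := \mathcal{T}_{M \tensor^{\sigma h} M,\, N \tensor^{\sigma h} N}\bigl(\mathcal{S}_\sigma(\boldsymbol{x} \tensor \boldsymbol{y})\bigr) \in \CB\bigl(M_\ast \tensor^{eh} M_\ast,\, N \tensor^{\sigma h} N\bigr)
\]
factors completely boundedly through $X \tensor^h Y$, say $\Phi = \tilde{S}\tilde{R}$ with $\tilde{R} \in \CB(M_\ast \tensor^{eh} M_\ast,\, X \tensor^h Y)$ and $\tilde{S} \in \CB(X \tensor^h Y,\, N \tensor^{\sigma h} N)$.

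The heart of the argument is then to express $\mathcal{T}_{M,N}(\boldsymbol{xy})$ in terms of $\Phi$. Since the multiplication of a von Neumann algebra is a completely contractive, separately weak$^\ast$-continuous bilinear map, it extends to normal complete contractions $m_M \colon M \tensor^{\sigma h} M \to M$ and $m_N \colon N \tensor^{\sigma h} N \to N$; write $\Delta \colon M_\ast \to M_\ast \tensor^{eh} M_\ast$ for the pre-adjoint of $m_M$ --- recall that $M_\ast \tensor^{eh} M_\ast$ is a predual of $M \tensor^{\sigma h} M$ --- which is the complete contraction characterized by $\langle \Delta f, a \tensor a'\rangle = \langle f, aa'\rangle$. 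I would then prove the identity
\[
  \mathcal{T}_{M,N}(\boldsymbol{xy}) = m_N \circ \Phi \circ \Delta .
\]
Both sides depend separately weak$^\ast$-continuously on $(\boldsymbol{x},\boldsymbol{y})$ --- here one invokes the separate weak$^\ast$-continuity of multiplication in $M \bar{\tensor} N$, the relevant weak$^\ast$-continuity properties of $\mathcal{S}_\sigma$ and of the normal tensor-product operations, and the weak$^\ast$-continuity of the identification~(\ref{firstiso}) --- so it is enough to check the identity for elementary tensors $\boldsymbol{x} = a_1 \tensor b_1$ and $\boldsymbol{y} = a_2 \tensor b_2$, using that $M \tensor N$ is weak$^\ast$-dense in $M \bar{\tensor} N$. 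For those, $\boldsymbol{xy} = a_1 a_2 \tensor b_1 b_2$ and $\mathcal{S}_\sigma(\boldsymbol{x} \tensor \boldsymbol{y}) = (a_1 \tensor a_2) \tensor (b_1 \tensor b_2)$, and a short computation with~(\ref{firstiso}) and the defining property of $\Delta$ shows that both sides, applied to $f \in M_\ast$, equal $\langle f, a_1 a_2\rangle\, b_1 b_2$.

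With the identity established, the conclusion is immediate: setting $R := \tilde{R} \circ \Delta \in \CB(M_\ast,\, X \tensor^h Y)$ and $S := m_N \circ \tilde{S} \in \CB(X \tensor^h Y,\, N)$ gives $\mathcal{T}_{M,N}(\boldsymbol{xy}) = m_N \circ \tilde{S} \circ \tilde{R} \circ \Delta = SR$, which exhibits $\mathcal{T}_{M,N}(\boldsymbol{xy})$ as factoring completely boundedly through $X \tensor^h Y$. The step I expect to be the real obstacle is pinning down the bridging identity $\mathcal{T}_{M,N}(\boldsymbol{xy}) = m_N \circ \Phi \circ \Delta$: one has to recast the ordinary product of the von Neumann algebra $M \bar{\tensor} N$ through the shuffle map $\mathcal{S}_\sigma$, the co-multiplication $\Delta$ dual to the multiplication of $M$, and the multiplication $m_N$, all the while keeping careful track of which normal Haagerup and extended Haagerup tensor products occur and of the attendant (pre)duality and weak$^\ast$-continuity statements. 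Everything else is formal bookkeeping on top of Theorem~\ref{mainfactorthm}.
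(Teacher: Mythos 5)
Your proposal is correct and takes essentially the same route as the paper: specialize Theorem \ref{mainfactorthm} to $E_1 = E_2 = M_\ast$, $F_1 = F_2 = N_\ast$ (injectivity supplying property $S_\sigma$), and then reduce everything to the bridging identity $\mathcal{T}_{M,N}(\boldsymbol{xy}) = m_N \, \mathcal{T}_{M \tensor^{\sigma h} M, N \tensor^{\sigma h} N}(\mathcal{S}_\sigma(\boldsymbol{x} \tensor \boldsymbol{y})) \, (m_M)_\ast$, which is precisely the paper's equation (\ref{faceq}). The only cosmetic difference is how that identity is verified: you check it directly on elementary tensors $\boldsymbol{x} = a_1 \tensor b_1$, $\boldsymbol{y} = a_2 \tensor b_2$ and extend by separate weak$^\ast$-continuity in $(\boldsymbol{x},\boldsymbol{y})$, while the paper encodes the same ``check on algebraic tensors, then use separate weak$^\ast$-weak$^\ast$ continuity'' step as the commutativity of the diagram (\ref{diagram2}) built from the composition maps $\mu$ and $\mu_\sigma$.
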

\begin{proof}
First, note that, by Theorem \ref{mainfactorthm}, the operator $\mathcal{T}_{M \tensor^{\sigma h} M, N \tensor^{\sigma h} N}(\mathcal{S}_\sigma (\boldsymbol{x} \tensor \boldsymbol{y})) \in \CB(M_\ast \tensor^{eh} M_\ast, N \tensor^{\sigma h} N)$ factors through $X \tensor^h Y$.
\par 
Multiplication in $M$ is a separately weak$^\ast$ continuous, multiplicatively bounded, bilinear map and thus induces a unique weak$^\ast$ continuous complete contraction $m_M \!: M \tensor^{\sigma h} M \to M$ (\cite[Proposition 5.9]{ERJoT}). Analogously, we obtain $m_N \!: N \tensor^{\sigma h} N \to N$ and $m_{M \bar{\tensor} N} \!: (M \bar{\tensor} N) \tensor^{\sigma h} (M \bar{\tensor} N) \to M \bar{\tensor} N$. We claim that
\begin{equation} \label{faceq}
  \mathcal{T}_{M,N}(\boldsymbol{xy}) = m_N \, \mathcal{T}_{M \tensor^{\sigma h} M, N \tensor^{\sigma h} N}(\mathcal{S}_\sigma (\boldsymbol{x} \tensor \boldsymbol{y})) \, (m_M)_\ast,
\end{equation}
where $(m_M)_\ast \!: M_\ast \to M_\ast \tensor^{eh} M_\ast$ is the preadjoint of $m_M$. Of course, if (\ref{faceq}) holds, the theorem is proven.
\par 
Let $\mu \!: (M \bar{\tensor} (X \tensor^h Y)) \times ((X^\ast \tensor^h Y^\ast) \bar{\tensor} N) \to M \bar{\tensor} N$ and $\mu_\sigma \!: ((M \tensor^{\sigma h} M) \bar{\tensor} (X \tensor^h Y)) \times ((X^\ast \tensor^h Y^\ast) \bar{\tensor} (N \tensor^{\sigma h} N)) \to (M \tensor^{\sigma h} M) \bar{\tensor} (N \tensor^{\sigma h} N)$ be the respective composition maps as in the proof of Theorem \ref{mainfactorthm} and consider the diagram
\begin{equation} \label{diagram2}
\begin{diagram}
  ((M \tensor^{\sigma h} M) \bar{\tensor} (X \tensor^h Y)) \times ((X^\ast \tensor^h Y^\ast) \bar{\tensor} (N \tensor^{\sigma h} N)) & \rTo^{\mu_\sigma} & (M \tensor^{\sigma h} M) \bar{\tensor} (N \tensor^{\sigma h} N) \\
  \dTo^{(m_M \tensor \id) \times (\id \tensor m_N)} & & \dTo_{m_{M \bar{\tensor} N}} \\
  (M \bar{\tensor} (X \tensor^h Y)) \times ((X^\ast \tensor^h Y^\ast) \bar{\tensor} N) & \rTo_{\mu} & M \bar{\tensor} N.
\end{diagram}
\end{equation}
By first checking on $((M \tensor^{\sigma h} M) \tensor (X \tensor^h Y)) \times ((X^\ast \tensor^h Y^\ast) \tensor (N \tensor^{\sigma h} N))$ and then using separate weak$^\ast$-weak$^\ast$ continuity, we conclude that (\ref{diagram2}) commutes, which entails (\ref{faceq}) and thus completes the proof.
\end{proof}
\par 
Even though the Haagerup tensor product of two reflexive operator spaces need not be reflexive again, there are some nice consequences of Theorem \ref{vNthm}.
\begin{corollary} \label{colcor} 
Let $M$ and $N$ be injective von Neumann algebras. Then the set of those $\boldsymbol{x} \in M \bar{\tensor} N$ such that $\mathcal{T}_{M,N} \boldsymbol{x}$ factors completely boundedly through column Hilbert space is a subalgebra of $M \bar{\tensor} N$.
\end{corollary}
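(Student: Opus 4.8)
Write $\mathcal{C}$ for the set described in the statement, i.e. the set of those $\boldsymbol{x} \in M \bar{\tensor} N$ for which $\mathcal{T}_{M,N}\boldsymbol{x}$ factors completely boundedly through \emph{some} column Hilbert space $H_c$. The plan is to check directly that $\mathcal{C}$ is a linear subspace of $M \bar{\tensor} N$ and that it is closed under multiplication; the first part is elementary, and the second is an immediate application of Theorem \ref{vNthm}.

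For linearity: closure under scalar multiples is trivial, and if $\mathcal{T}_{M,N}\boldsymbol{x} = S_1 R_1$ factors through $H_c$ (with $R_1 \in \CB(M_\ast, H_c)$, $S_1 \in \CB(H_c, N)$) and $\mathcal{T}_{M,N}\boldsymbol{y} = S_2 R_2$ factors through $K_c$, then, since $\mathcal{T}_{M,N}$ is linear, $\mathcal{T}_{M,N}(\boldsymbol{x}+\boldsymbol{y}) = S_1 R_1 + S_2 R_2$ factors through $H_c \oplus K_c$ via the completely bounded maps $f \mapsto (R_1 f, R_2 f)$ and $(h,k) \mapsto S_1 h + S_2 k$; as $H_c \oplus K_c$ is completely isometrically $(H \oplus_2 K)_c$, this is again a column Hilbert space, so $\boldsymbol{x} + \boldsymbol{y} \in \mathcal{C}$.

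For multiplicative closure, take $\boldsymbol{x}, \boldsymbol{y} \in \mathcal{C}$ with $\mathcal{T}_{M,N}\boldsymbol{x}$ factoring through $X = H_c$ and $\mathcal{T}_{M,N}\boldsymbol{y}$ through $Y = K_c$. To invoke Theorem \ref{vNthm} I need $X \tensor^h Y = H_c \tensor^h K_c$ to be reflexive, and here I would use the key fact that the Haagerup tensor product of two column Hilbert spaces is again a column Hilbert space: $H_c \tensor^h K_c \cong (H \tensor_2 K)_c$ (see \cite[Chapter 9]{ER}). Since a column Hilbert space is a Hilbert space as a Banach space, it is reflexive; hence Theorem \ref{vNthm} applies and shows that $\mathcal{T}_{M,N}(\boldsymbol{xy})$ factors completely boundedly through $H_c \tensor^h K_c \cong (H \tensor_2 K)_c$, i.e. through a column Hilbert space, so $\boldsymbol{xy} \in \mathcal{C}$. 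There is no serious obstacle here — the substance is entirely in Theorem \ref{vNthm} together with the classical identity $H_c \tensor^h K_c \cong (H \tensor_2 K)_c$ — but it is worth emphasising that the argument genuinely exploits column-against-column staying reflexive, in contrast to the column-against-row product $H_c \tensor^h (K_c)^\ast \cong \mathcal{K}(K,H)$ of Example 4.
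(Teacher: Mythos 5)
Your argument is correct and follows the paper's proof exactly: the whole point is that $H_c \tensor^h K_c \cong (H \tensor_2 K)_c$ (Proposition 9.3.5 of \cite{ER}) is again a column Hilbert space, hence reflexive, so Theorem \ref{vNthm} gives multiplicative closure, and your explicit check of linearity is a harmless addition that the paper leaves implicit. (One small nitpick: your direct sum $H_c \oplus K_c$ need not be \emph{completely isometric} to $(H \oplus_2 K)_c$ in whichever norm you intend, but this is irrelevant, since the factorization of $\mathcal{T}_{M,N}(\boldsymbol{x}+\boldsymbol{y})$ through $(H \oplus_2 K)_c$ via completely bounded maps is all that is needed.)
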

\begin{proof}
As the Haagerup tensor product of two column Hilbert spaces is again a column Hilbert space (\cite[Proposition 9.3.5]{ER}), the result follows immediately from Theorem \ref{vNthm}.
\end{proof}
\begin{remark}
By \cite[Proposition 9.3.5]{ER} and \cite[Corollary 2.12]{Pis}, respectively, analogous results hold for row Hilbert space and Pisier's operator Hilbert space.
\end{remark}
\par 
Following \cite{ER}, we denote the injective operator space tensor product by $\wTensor$; for $\cstar$-algebras, it is just the usual spatial tensor  product.
\begin{corollary} \label{modcor}
Let $M$ and $N$ be injective von Neumann algebras. Then $\mathcal{W}(M \bar{\tensor} N)$ is a bimodule over $M \wTensor N$.
\end{corollary}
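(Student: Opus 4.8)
The plan is to reduce everything, by a density argument, to the case where one of the two factors comes from the algebraic tensor product, and then to feed that case into Theorem~\ref{vNthm}. Recall that $M \wTensor N$ is a $\cstar$-subalgebra of $M \bar{\tensor} N$, that $\mathcal{W}(M \bar{\tensor} N)$ is a norm-closed subspace of $M \bar{\tensor} N$, and that multiplication in $M \bar{\tensor} N$ is norm continuous; since the algebraic tensor product $M \tensor N$ is norm dense in $M \wTensor N$, it therefore suffices to prove that $\boldsymbol{ax}$ and $\boldsymbol{xa}$ lie in $\mathcal{W}(M \bar{\tensor} N)$ whenever $\boldsymbol{a} \in M \tensor N$ and $\boldsymbol{x} \in \mathcal{W}(M \bar{\tensor} N)$. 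Indeed, given a general $\boldsymbol{a} \in M \wTensor N$, one picks $\boldsymbol{a}_n \in M \tensor N$ with $\| \boldsymbol{a}_n - \boldsymbol{a} \| \to 0$, observes that $\| \boldsymbol{a}_n \boldsymbol{x} - \boldsymbol{ax} \| \leq \| \boldsymbol{a}_n - \boldsymbol{a} \| \, \| \boldsymbol{x} \| \to 0$ and that $\boldsymbol{a}_n \boldsymbol{x} \in \mathcal{W}(M \bar{\tensor} N)$ by the dense case, and lets $n \to \infty$, using that $\mathcal{W}(M \bar{\tensor} N)$ is norm closed; the argument on the other side is identical.

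So fix $\boldsymbol{a} \in M \tensor N$ and $\boldsymbol{x} \in \mathcal{W}(M \bar{\tensor} N)$. Since $\boldsymbol{a}$ is a finite sum of elementary tensors, the range of $\mathcal{T}_{M,N} \boldsymbol{a}$ is a finite-dimensional subspace $X$ of $N$; equipping $X$ with the operator space structure inherited from $N$, the inclusion $X \hookrightarrow N$ is a complete isometry and the corestriction $M_\ast \to X$ is completely bounded, so $\mathcal{T}_{M,N} \boldsymbol{a}$ factors completely boundedly through the finite-dimensional---hence reflexive---operator space $X$. On the other hand, $\mathcal{T}_{M,N} \boldsymbol{x}$ is weakly compact, so it factors completely boundedly through some reflexive operator space $Y$ (\cite{PSch}, or \cite[Theorem 4.4]{Daw1}). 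Because one of the two factors is finite-dimensional, both $X \tensor^h Y$ and $Y \tensor^h X$ are reflexive---this is precisely the first of the examples following Theorem~\ref{mainfactorthm}. We may now invoke Theorem~\ref{vNthm}, which is where the injectivity of $M$ and $N$ is used: applied with the ordered pair of factors $(\boldsymbol{a}, \boldsymbol{x})$ it yields that $\mathcal{T}_{M,N}(\boldsymbol{ax})$ factors completely boundedly through $X \tensor^h Y$, and applied with $(\boldsymbol{x}, \boldsymbol{a})$ it yields that $\mathcal{T}_{M,N}(\boldsymbol{xa})$ factors completely boundedly through $Y \tensor^h X$. As a bounded operator factoring through a reflexive space is weakly compact, it follows that $\mathcal{T}_{M,N}(\boldsymbol{ax})$ and $\mathcal{T}_{M,N}(\boldsymbol{xa})$ are weakly compact, i.e.\ $\boldsymbol{ax}, \boldsymbol{xa} \in \mathcal{W}(M \bar{\tensor} N)$, which is what the dense case requires.

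I do not expect a genuine obstacle here: once Theorem~\ref{vNthm} is available, the corollary amounts to choosing the right auxiliary operator spaces. The only points that call for (routine) care are the two small observations used implicitly above---that a finite-rank completely bounded map factors completely boundedly through a finite-dimensional operator space, and that $M \tensor N$ is norm dense in $M \wTensor N$ with norm-continuous multiplication---together with the trivial instance, already recorded among the examples following Theorem~\ref{mainfactorthm}, that the Haagerup tensor product of a finite-dimensional operator space with a reflexive one is reflexive. Everything else is bookkeeping.
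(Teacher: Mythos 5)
Your proof is correct and follows essentially the same route as the paper: reduce to $\boldsymbol{a} \in M \tensor N$ (the paper dispatches this with ``no loss of generality,'' which your density argument just makes explicit), factor $\mathcal{T}_{M,N}\boldsymbol{a}$ through a finite-dimensional space and $\mathcal{T}_{M,N}\boldsymbol{x}$ through a reflexive one via \cite{PSch} or \cite{Daw1}, note the reflexivity of $X \tensor^h Y$ and $Y \tensor^h X$, and apply Theorem \ref{vNthm}. No substantive differences.
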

\begin{proof}
Let $\boldsymbol{x} \in M \wTensor N$ and let $\boldsymbol{y} \in \mathcal{W}(M \bar{\tensor} N)$; we need to show that $\boldsymbol{xy}, \boldsymbol{yx} \in \mathcal{W}(M \bar{\tensor} N)$. There is no loss of generality to suppose that $\boldsymbol{x} \in M \tensor N$. By \cite{PSch} or \cite{Daw1}, $\mathcal{T}_{M,N} \boldsymbol{y}$ factors through a reflexive operator space, say $Y$, and trivially, $\mathcal{T}_{M,N} \boldsymbol{x}$ factors through a finite-dimensional operator space, say $X$. Consequently, $X \tensor^h Y$ and $Y \tensor^h X$ are reflexive. Hence, by Theorem \ref{vNthm}, $\mathcal{T}_{M,N}(\boldsymbol{xy})$ and $\mathcal{T}_{M,N}(\boldsymbol{yx})$ factor through $X \tensor^h Y$ and $Y \tensor^h X$, respectively, and, consequently, are weakly compact.
\end{proof}
\par
A von Neumann algebra $M$ is called \emph{subhomogeneous} if it has the form $M_{n_1}(A_1) \oplus \cdots \oplus M_{n_k}(A_k)$ with $n_1, \ldots, n_k \in \posints$ and abelian von Neumann algebras $A_1, \ldots, A_k$. If $M$ is subhomogeneous, the given operator space structure and $\min M$ are equivalent, i.e., the identity is completely bounded from $\min M$ to $M$.
\begin{corollary} \label{subhomcor}
Let $M$ and $N$ both be subhomogeneous von Neumann algebras. Then $\mathcal{W}(M \bar{\tensor} N)$ is a unital $\cstar$-subalgebra of $M \bar{\tensor} N$.
\end{corollary}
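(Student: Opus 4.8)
The plan is to reduce the statement to Theorem~\ref{vNthm}, exploiting the rigidity of the operator space structures of $M_\ast$ and $N$ that subhomogeneity forces. Since $\mathcal{W}(M \bar{\tensor} N)$ is already known to be a closed, self-adjoint subspace of $M \bar{\tensor} N$ containing the identity, all that remains is to show that it is closed under multiplication. I would first record two facts. (1) A subhomogeneous von Neumann algebra, being a finite direct sum of algebras $M_n(A)$ with $A$ abelian, is injective; hence Theorem~\ref{vNthm} is applicable to $M$ and $N$. (2) Since $\id \colon \min M \to M$ is completely bounded (and $\id \colon M \to \min M$ is always completely contractive), $M$ and $\min M$ are completely boundedly isomorphic, and hence $M_\ast$ is completely boundedly isomorphic to $(\min M)_\ast = \max(M_\ast)$; the same holds for $N$. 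As completely bounded maps out of a maximal operator space, and completely bounded maps into a minimal operator space, are precisely the bounded ones, it follows that for \emph{every} operator space $W$ one has $\CB(M_\ast, W) = \mathcal{B}(M_\ast, W)$ and $\CB(W, N) = \mathcal{B}(W, N)$, with equivalent norms.

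Now let $\boldsymbol{x}, \boldsymbol{y} \in \mathcal{W}(M \bar{\tensor} N)$, so that $\mathcal{T}_{M,N}\boldsymbol{x}$ and $\mathcal{T}_{M,N}\boldsymbol{y}$ are weakly compact bounded operators from $M_\ast$ into $N$. By the factorization theorem of Davis, Figiel, Johnson, and Pe\l czy\'nski (\cite{Davetal}), there are reflexive Banach spaces $Z_1$ and $Z_2$ such that $\mathcal{T}_{M,N}\boldsymbol{x}$ factors as a bounded map through $Z_1$ and $\mathcal{T}_{M,N}\boldsymbol{y}$ factors as a bounded map through $Z_2$. Put $X := \max Z_1$ and $Y := \min Z_2$. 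By fact~(2), both legs of each factorization are automatically completely bounded (the first leg being a map out of $M_\ast$, the second a map into $N$), so $\mathcal{T}_{M,N}\boldsymbol{x}$ factors completely boundedly through $X$ and $\mathcal{T}_{M,N}\boldsymbol{y}$ factors completely boundedly through $Y$. Moreover $X$ and $Y$ are reflexive operator spaces, and both $X \tensor^h Y = \max Z_1 \tensor^h \min Z_2$ and $Y \tensor^h X = \min Z_2 \tensor^h \max Z_1$ are reflexive, by the reflexivity of the Haagerup tensor product of a reflexive maximal and a reflexive minimal operator space noted in the examples above (which relies on \cite[Theorem~3.1(v)]{Ble} and \cite{AS}).

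Applying Theorem~\ref{vNthm} to $\boldsymbol{x}, \boldsymbol{y}$ with the operator spaces $X, Y$ shows that $\mathcal{T}_{M,N}(\boldsymbol{xy})$ factors completely boundedly through the reflexive space $X \tensor^h Y$ and is therefore weakly compact, i.e.\ $\boldsymbol{xy} \in \mathcal{W}(M \bar{\tensor} N)$; applying it to $\boldsymbol{y}, \boldsymbol{x}$ with the operator spaces $Y, X$ yields $\boldsymbol{yx} \in \mathcal{W}(M \bar{\tensor} N)$ in the same way. Hence $\mathcal{W}(M \bar{\tensor} N)$ is closed under multiplication, and is thus a unital $\cstar$-subalgebra of $M \bar{\tensor} N$.

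The one point that needs care is the reflexivity of the Haagerup tensor product of the factoring operator spaces: for general reflexive $X$ and $Y$ the space $X \tensor^h Y$ may fail to be reflexive (see the examples above), so one cannot simply feed the operator-space factorization of a weakly compact completely bounded map (\cite{PSch}) into Theorem~\ref{vNthm}. Subhomogeneity is used exactly to get around this: since $M_\ast$ is, up to complete isomorphism, maximal and $N$ is, up to complete isomorphism, minimal, one is free to put \emph{any} operator space structure on the reflexive spaces through which the given operators factor at the Banach-space level, and choosing these maximal and minimal lands us in the one regime where the Haagerup tensor product is guaranteed to remain reflexive. The only genuinely external inputs are thus \cite{Davetal} and the reflexivity of $\min \tensor^h \max$ for reflexive operator spaces.
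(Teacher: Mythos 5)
Your proof is correct and takes essentially the same route as the paper: a DFJP factorization of $\mathcal{T}_{M,N}\boldsymbol{x}$ and $\mathcal{T}_{M,N}\boldsymbol{y}$ through reflexive Banach spaces, with subhomogeneity (via the $\min$/$\max$ equivalences for $N$ and $M_\ast$) making the legs completely bounded, followed by the reflexivity of the Haagerup tensor product of a maximal and a minimal reflexive operator space and an appeal to Theorem \ref{vNthm}. The only cosmetic difference is that you give $\boldsymbol{x}$'s factor the maximal and $\boldsymbol{y}$'s the minimal structure while the paper does the reverse (and you additionally, but harmlessly, verify $\boldsymbol{yx}$); both choices are covered by the examples preceding Lemma \ref{hrefl}.
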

\begin{proof}
Let $\boldsymbol{x}, \boldsymbol{y} \in \mathcal{W}(M \bar{\tensor} N)$. Then $\mathcal{T}_{M,N} \boldsymbol{x}$ and $\mathcal{T}_{M,N} \boldsymbol{y}$ factor---according to \cite{Davetal}---through reflexive Banach spaces, say $X$ and $Y$, respectively. As the given operator structure of $M$ is equivalent to $\min M$ and the given operator structure on $N_\ast$ is equivalent to $\max N_\ast$, we see that $\mathcal{T}_{M,N} \boldsymbol{x}$ and $\mathcal{T}_{M,N} \boldsymbol{y}$ factor completely boundedly through $\min X$ and $\max Y$, respectively. As $(\min X) \tensor^h (\max Y)$ is reflexive, we conclude from Theorem \ref{vNthm} that $\mathcal{T}_{M,N}(\boldsymbol{xy})$ factors through $(\min X) \tensor^h (\max Y)$ and thus is weakly compact, i.e., $\boldsymbol{xy} \in \mathcal{W}(M \bar{\tensor} N)$.
\end{proof}
\section{Applications to Hopf--von Neumann algebras}
Recall the definition of a Hopf--von Neumann algebra.
\begin{definition}
A \emph{Hopf--von Neumann algebra} is a pair $(M,\Gamma)$ where $M$ is a von Neumann algebra, and $\Gamma \!: M \to M \bar{\tensor} M$ is a \emph{co-multiplication}, i.e., a faithful, normal, unital $^\ast$-homomorphism such that
\[
  (\Gamma \tensor \id) \circ \Gamma = (\id \tensor \Gamma) \circ \Gamma.
\]
\end{definition}
\par 
If $(M,\Gamma)$ is a Hopf--von Neumann algebra, then $M_\ast$ is a completely contractive Banach algebra in a canonical way through
\begin{equation} \label{product}
  \langle f \ast g, x \rangle := \langle f \tensor g, \Gamma x \rangle \qquad (f,g \in M_\ast, \, x \in M).
\end{equation}
\par
Hopf--von Neumann algebras arise naturally in abstract harmonic analysis:
\begin{example}
Let $G$ be a locally compact group. Define $\Gamma \!: L^\infty(G) \to L^\infty(G) \bar{\tensor} L^\infty(G) \cong L^\infty(G \times G)$ through
\[
  (\Gamma \phi)(x,y) := \phi(xy) \qquad (\phi \in L^\infty(G), \, x,y \in G).
\]
Then $(L^\infty(G),\Gamma)$ is a Hopf--von Neumann algebra, and the resulting product on $L^\infty(G)_\ast = L^1(G)$ is the usual convolution product. On the other hand, the co-multiplication
\[
  \hat{\Gamma} \!: \VN(G) \to \VN(G) \bar{\tensor} \VN(G), \quad \lambda(x) \mapsto \lambda(x) \tensor \lambda(x)
\]
($\lambda$ is here the left regular representation of $G$ on $L^2(G)$) yields the pointwise product on $A(G) = \VN(G)_\ast$.
\end{example}
\par 
Given a Banach algebra $A$, its dual space is a Banach $A$-bimodule in a canonical fashion. A functional $\phi \in A^\ast$ is called 
\emph{almost periodic} if the map
\begin{equation}  \label{lefty}
  A \to A^\ast, \quad a \mapsto a \cdot \phi
\end{equation}
is compact and \emph{weakly almost periodic} if is weakly compact. As
\begin{equation}  \label{righty}
  A \to A^\ast, \quad a \mapsto \phi \cdot a
\end{equation}
is only the adjoint of (\ref{lefty}) restricted to $A$, the perceived asymmetry in these definitions does, in fact, not exist. We set
\begin{align*}
  \AP(A) & := \{ \phi \in A^\ast : \text{$\phi$ is almost periodic} \} \\
  \intertext{and}
  \WAP(A) & := \{ \phi \in A^\ast : \text{$\phi$ is weakly almost periodic} \}.
\end{align*}
\par 
If $A$ is a completely contractive Banach algebra, the definitions of $\AP(A)$ and $\WAP(A)$ are somewhat unsatisfactory because they fail to take any operator space structure into account. 
\par 
In his Diplomarbeit \cite{Saa} under the supervision of G.\ Wittstock, H.\ Saar introduced the notion of a completely compact map between two operator spaces (or, rather, $\cstar$-algebras due to lack of abstract operator spaces at the time \cite{Saa} was written). In modern terminology, it reads as follows: for two operator spaces $E$ and $F$, a map $T \in \CB(E,F)$ is called \emph{completely compact} if, for each $\epsilon > 0$, there is a finite-dimensional subspace $Y_\epsilon$ such that $\| Q_{Y_\epsilon} T \|_\cb < \epsilon$, where $Q_{Y_\epsilon} \!: F \to F / Y_\epsilon$ is the quotient map. In \cite{Run2}, the author defined, for a completely contractive Banach algebra $A$, a functional $\phi \in A^\ast$ to be \emph{completely almost periodic} if the maps (\ref{lefty}) and (\ref{righty}) are completely compact (see \cite{Run2}, for a discussion of why we need to consider both (\ref{lefty}) and (\ref{righty}) here). We define
\[
  \CAP(A) := \{ \phi \in A^\ast : \text{$\phi$ is completely almost periodic} \}.
\]
\par
The following has been the primary motivation for the research in this paper:
\begin{question} 
Let $(M,\Gamma)$ be a Hopf--von Neumann algebra. Is $\WAP(M_\ast)$ a $\cstar$-subalgebra of $M$?
\end{question}
\par 
Of course, one can replace $\WAP(M_\ast)$ by $\AP(M_\ast)$ or $\CAP(M_\ast)$ in this question.
\par
The only difficult part in answering the question is to determine whether $\WAP(M_\ast)$---or $\AP(M_\ast)$ or $\CAP(M_\ast)$---is closed under multiplication. In \cite{Run2}, the author showed that $\CAP(M_\ast)$ is indeed a $\cstar$-algebra for injective $M$, and in \cite{Daw2}, Daws proved for abelian $M$ that both $\AP(M_\ast)$ and $\WAP(M_\ast)$ are $\cstar$-algebras.
\par 
We note:
\begin{proposition} \label{Wprop}
Let $(M,\Gamma)$ be a Hopf--von Neumann algebra. Then 
\[
  \WAP(M_\ast) = \{ x \in M : \mathcal{T}_{M,M}(\Gamma x) \in \mathcal{W}(M \bar{\tensor} N) \}.
\]
\end{proposition}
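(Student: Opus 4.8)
The plan is to recognise $\mathcal{T}_{M,M}(\Gamma x)$ as, up to composition with a canonical embedding, the adjoint of the very operator whose weak compactness defines membership of $x$ in $\WAP(M_\ast)$, and then to read off the stated equivalence from Gantmacher's theorem. Fix $x \in M$, put $A = M_\ast$ so that $A^\ast = M$, and let $L_x \colon M_\ast \to M$, $f \mapsto f \cdot x$, be the left module map $(\ref{lefty})$ specialised to $\phi = x$; by definition, $x \in \WAP(M_\ast)$ if and only if $L_x$ is weakly compact.

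The first step is the elementary computation that, for all $f, g \in M_\ast$,
\[
  \langle L_x f, g \rangle = \langle g \ast f, x \rangle = \langle g \tensor f, \Gamma x \rangle = \langle (g \tensor \id)(\Gamma x), f \rangle = \langle (\mathcal{T}_{M,M}(\Gamma x)) g, f \rangle ,
\]
where the first equality is the definition of the module action, the second is $(\ref{product})$, the third unravels the slice map $g \tensor \id$, and the last is $(\ref{firstiso})$; this is simply the tensor-product incarnation of the symmetry between $(\ref{lefty})$ and $(\ref{righty})$. Denoting by $\kappa \colon M_\ast \hookrightarrow M^\ast$ the canonical isometric inclusion, the displayed identity says precisely that
\[
  L_x = (\mathcal{T}_{M,M}(\Gamma x))^\ast \circ \kappa \qquad \text{and} \qquad \mathcal{T}_{M,M}(\Gamma x) = (L_x)^\ast \circ \kappa .
\]

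With these two identities the proof closes symmetrically. If $\mathcal{T}_{M,M}(\Gamma x)$ is weakly compact, then so is its adjoint by Gantmacher's theorem, hence so is $L_x = (\mathcal{T}_{M,M}(\Gamma x))^\ast \circ \kappa$, since weak compactness is preserved under composition with bounded operators; conversely, if $L_x$ is weakly compact, the same reasoning applied to the second identity shows that $\mathcal{T}_{M,M}(\Gamma x)$ is weakly compact. Hence $x \in \WAP(M_\ast)$ if and only if $\mathcal{T}_{M,M}(\Gamma x)$ is weakly compact, that is, if and only if $\Gamma x \in \mathcal{W}(M \bar{\tensor} M)$, which is the assertion. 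I expect no genuine obstacle here; the one point deserving care is the bookkeeping with the two legs of the tensor product and with the module action, so that the first chain of equalities really does come out symmetric in $f$ and $g$ — once that is done, Gantmacher's theorem does the rest.
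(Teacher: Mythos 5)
Your proof is correct and takes essentially the same route as the paper: your chain of equalities is exactly the paper's one-line computation via (\ref{firstiso}) and (\ref{product}) identifying $\mathcal{T}_{M,M}(\Gamma x)$ with the module map attached to $x$. Your explicit adjoint identities and the appeal to Gantmacher's theorem simply spell out the paper's earlier remark that (\ref{righty}) is the adjoint of (\ref{lefty}) restricted to $A$, which is what makes the claim ``immediate'' there.
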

\begin{proof}
By (\ref{firstiso}) and (\ref{product}), we have
\[
  \mathcal{T}_{M,M} (\Gamma x) f = (f \tensor \id)(\Gamma x) = x \cdot f \qquad (x \in M, \, f \in M_\ast).
\]
The claim is then immediate.
\end{proof}
\par 
Let $(M,\Gamma)$ a Hopf--von Neumann algebra with $M$ injective. In \cite{Run2}, it was shown that
\[
  \CAP(M_\ast) = \{ x \in M : \Gamma x \in M \wTensor M \},
\]
(which immediately yields that $\CAP(M_\ast)$ is a $\cstar$-algebra). In view of Proposition \ref{Wprop}, we thus obtain from Corollary \ref{modcor}:
\begin{corollary}
Let $(M,\Gamma)$ be a Hopf--von Neumann algebra such that $M$ is injective. Then $\WAP(M_\ast)$ is a bimodule over $\CAP(M_\ast)$.
\end{corollary}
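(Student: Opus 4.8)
The plan is to combine Proposition \ref{Wprop} with the module result of Corollary \ref{modcor}, translating statements about $\WAP(M_\ast)$ and $\CAP(M_\ast)$ into statements about $\mathcal{W}(M \bar{\tensor} M)$ and $M \wtensor M$ via the co-multiplication $\Gamma$. First I would fix $x \in \WAP(M_\ast)$ and $y \in \CAP(M_\ast)$; by Proposition \ref{Wprop}, $\Gamma x \in \mathcal{W}(M \bar{\tensor} M)$, and by the description of $\CAP(M_\ast)$ recalled just above the statement (valid because $M$ is injective), $\Gamma y \in M \wtensor M$. The goal is to show $xy, yx \in \WAP(M_\ast)$, which by Proposition \ref{Wprop} again amounts to $\Gamma(xy), \Gamma(yx) \in \mathcal{W}(M \bar{\tensor} M)$.

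The key step is that $\Gamma$ is a $^\ast$-homomorphism, so $\Gamma(xy) = (\Gamma x)(\Gamma y)$ and $\Gamma(yx) = (\Gamma y)(\Gamma x)$ as products in the von Neumann algebra $M \bar{\tensor} M$. Then I would invoke Corollary \ref{modcor} with $N := M$: since $M$ is injective, $\mathcal{W}(M \bar{\tensor} M)$ is a bimodule over $M \wtensor M$, so the product of $\Gamma y \in M \wtensor M$ with $\Gamma x \in \mathcal{W}(M \bar{\tensor} M)$ on either side lands back in $\mathcal{W}(M \bar{\tensor} M)$. Hence $\Gamma(xy)$ and $\Gamma(yx)$ lie in $\mathcal{W}(M \bar{\tensor} M)$, and Proposition \ref{Wprop} gives $xy, yx \in \WAP(M_\ast)$, which is exactly the assertion that $\WAP(M_\ast)$ is a bimodule over $\CAP(M_\ast)$.

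There is essentially no obstacle here beyond bookkeeping: the substance is entirely in Theorem \ref{vNthm} and Corollary \ref{modcor}, and this corollary is a direct transport of those along $\Gamma$. The one point that deserves a sentence is why $\Gamma$ maps $\CAP(M_\ast)$ into $M \wtensor M$ and $\WAP(M_\ast)$ into $\mathcal{W}(M \bar{\tensor} M)$ compatibly with multiplication — but this is immediate from the cited identity $\CAP(M_\ast) = \{x : \Gamma x \in M \wtensor M\}$, from Proposition \ref{Wprop}, and from $\Gamma$ being a homomorphism. So the proof is little more than: apply Proposition \ref{Wprop} and the $\CAP$ description, use that $\Gamma$ is multiplicative, apply Corollary \ref{modcor}, and apply Proposition \ref{Wprop} once more in the reverse direction.
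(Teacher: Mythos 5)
Your proposal is correct and is exactly the paper's (implicit) argument: the paper derives this corollary by combining Proposition \ref{Wprop}, the characterization $\CAP(M_\ast) = \{x \in M : \Gamma x \in M \wtensor M\}$ for injective $M$, and Corollary \ref{modcor}, using that $\Gamma$ is multiplicative. You have simply written out the bookkeeping the paper leaves to the reader, and it is all accurate.
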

\begin{remark}
This applies, for instance, to $(\VN(G),\hat{\Gamma})$ where $G$ is amenable or connected.
\end{remark}
\par 
Via Proposition \ref{Wprop}, we also recover (and mildly improve) \cite[Theorem 4]{Daw2} from Corollary \ref{subhomcor}:
\begin{corollary}
Let $(M,\Gamma)$ be a Hopf--von Neumann algebra such that $M$ is subhomogeneous. Then $\WAP(M_\ast)$ is a $\cstar$-subalgebra of $M$.
\end{corollary}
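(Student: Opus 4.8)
The plan is to reduce the assertion, via Proposition~\ref{Wprop}, to the already-established Corollary~\ref{subhomcor}, using that $\Gamma$ is a normal unital $^\ast$-homomorphism.

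First I would note that a normal spatial tensor product of subhomogeneous von Neumann algebras is again subhomogeneous: writing $M = M_{n_1}(A_1) \oplus \cdots \oplus M_{n_k}(A_k)$ with the $A_i$ abelian, one has $M \bar{\tensor} M \cong \bigoplus_{i,j} M_{n_i n_j}(A_i \bar{\tensor} A_j)$, and each $A_i \bar{\tensor} A_j$ is again abelian. Hence Corollary~\ref{subhomcor}, applied with $N = M$, yields that $\mathcal{W}(M \bar{\tensor} M)$ is a unital $\cstar$-subalgebra of $M \bar{\tensor} M$.

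Next, by Proposition~\ref{Wprop} we have $\WAP(M_\ast) = \{ x \in M : \Gamma x \in \mathcal{W}(M \bar{\tensor} M) \} = \Gamma^{-1}\bigl(\mathcal{W}(M \bar{\tensor} M)\bigr)$. Since $\Gamma \!: M \to M \bar{\tensor} M$ is a unital $^\ast$-homomorphism, and in particular norm-continuous, the preimage under $\Gamma$ of a norm-closed, self-adjoint, multiplicatively closed, unital subspace is again of that form. Concretely: if $x, y \in \WAP(M_\ast)$, then $\Gamma(xy) = (\Gamma x)(\Gamma y) \in \mathcal{W}(M \bar{\tensor} M)$, so $xy \in \WAP(M_\ast)$; likewise $\Gamma(x^\ast) = (\Gamma x)^\ast \in \mathcal{W}(M \bar{\tensor} M)$, so $x^\ast \in \WAP(M_\ast)$; and $\Gamma 1 = 1 \in \mathcal{W}(M \bar{\tensor} M)$. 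This is exactly the claim.

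I do not anticipate a genuine obstacle here: the only points needing a word of justification are the permanence of subhomogeneity under $\bar{\tensor}$ and the standard fact that the preimage of a $\cstar$-subalgebra under a $^\ast$-homomorphism of $C^\ast$-algebras is again a $\cstar$-subalgebra. All the substance has been absorbed into Corollary~\ref{subhomcor} — and hence ultimately into Theorem~\ref{vNthm} and the reflexivity of $(\min X) \tensor^h (\max Y)$.
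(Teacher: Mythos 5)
Your argument is correct and is essentially the paper's own route: Proposition \ref{Wprop} identifies $\WAP(M_\ast)$ with $\Gamma^{-1}(\mathcal{W}(M \bar{\tensor} M))$, and since $\Gamma$ is a (norm-continuous) unital $^\ast$-homomorphism, Corollary \ref{subhomcor} applied with $N = M$ gives that this preimage is a unital $\cstar$-subalgebra of $M$. The only superfluous step is your first paragraph: Corollary \ref{subhomcor} needs subhomogeneity of $M$ and $N$ themselves, not of $M \bar{\tensor} N$, so the (true) observation that $M \bar{\tensor} M$ is again subhomogeneous is not required.
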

\begin{remark}
The analogous statement is also true for $\AP(M_\ast)$: the proof of \cite[Theorem 1]{Daw2} adapts effortlessly from the abelian to the general subhomogeneous case.
\end{remark}
\par 
Let $(M,\Gamma)$ be a Hopf--von Neumann algebra with $M$ injective, and let $x, y \in M$ be such that $\mathcal{T}_{M,M}(\Gamma x)$ factors through a minimal operator space and $\mathcal{T}_{M,M}(\Gamma y)$ factors through a maximal one, then $xy \in \WAP(M_\ast)$ by Theorem \ref{vNthm} and Proposition \ref{Wprop}. If both $\mathcal{T}_{M,M}(\Gamma x)$ and $\mathcal{T}_{M,M}(\Gamma y)$ factor through a minimal operator space, then we generally don't know if $xy \in \WAP(M_\ast)$: at the Banach space level, the Haagerup tensor product of two minimal operator spaces is just Grothendieck's $H$-tensor product (\cite[Proposition 4.1]{BP}), which doesn't preserve reflexivity.
\par 
Surprisingly, for certain Hopf--von Neumann algebras, however, there is a way around this. Let $(M,\Gamma)$ be a Hopf--von Neumann algebra, and let $\Sigma \!: M \bar{\tensor} M \to M \bar{\tensor} M$ be the normal extension of the flip map $M \tensor M \ni x \tensor y \mapsto y \tensor x$. We call $(M,\Gamma)$ \emph{co-commutative} if $\Sigma \Gamma = \Gamma$, which is equivalent to $M_\ast$ being commutative. For instance, $(\VN(G),\hat{\Gamma})$ is co-commutative for every locally compact group $G$.
\begin{proposition} \label{cocom}
Let $(M,\Gamma)$ be a co-commutative von Neumann algebra with $M$ injective, and let $x,y \in M$ be such that $\mathcal{T}_{M,M}(\Gamma x)$ and $\mathcal{T}_{M,M}(\Gamma y)$ each factor through a minimal reflexive operator space. Then $xy \in \WAP(M_\ast)$.
\end{proposition}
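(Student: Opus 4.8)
The plan is to deduce this from Theorem~\ref{vNthm} by exploiting co-commutativity to trade one of the two minimal factorizations for a maximal one: although the Haagerup tensor product of two minimal reflexive operator spaces need not be reflexive, the Haagerup tensor product of a minimal reflexive operator space with a maximal reflexive one \emph{is} reflexive, by \cite[Theorem 3.1(v)]{Ble} and \cite{AS} (this is one of the examples discussed above).

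The first step is to record the symmetry that co-commutativity --- equivalently, commutativity of $M_\ast$ --- imposes on the operators $T_z := \mathcal{T}_{M,M}(\Gamma z) \in \CB(M_\ast, M)$, $z \in M$. By (\ref{firstiso}) and (\ref{product}), $\langle g, T_z f \rangle = \langle f \ast g, z \rangle$ for all $f, g \in M_\ast$, and since $M_\ast$ is commutative this equals $\langle g \ast f, z \rangle = \langle f, T_z g \rangle$. Thus the bilinear form $(f,g) \mapsto \langle g, T_z f \rangle$ on $M_\ast \times M_\ast$ is symmetric, which translates into the operator identity $T_z^\ast \circ \iota = T_z$, where $\iota \colon M_\ast \hookrightarrow M^\ast$ is the canonical completely isometric embedding and $T_z^\ast \colon M^\ast \to (M_\ast)^\ast = M$ is the adjoint of $T_z$.

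The second step uses this to interchange ``minimal'' and ``maximal''. Suppose $T_z$ factors completely boundedly through $\min Z_0$ for some reflexive Banach space $Z_0$, say $T_z = B A$ with $A \in \CB(M_\ast, \min Z_0)$ and $B \in \CB(\min Z_0, M)$. Passing to adjoints and using both that the adjoint of a completely bounded map is completely bounded and the standard completely isometric duality $(\min Z_0)^\ast = \max(Z_0^\ast)$, we obtain $T_z = T_z^\ast \iota = A^\ast (B^\ast \iota)$ with $B^\ast \iota \in \CB(M_\ast, \max Z_0^\ast)$ and $A^\ast \in \CB(\max Z_0^\ast, M)$; hence $T_z$ also factors completely boundedly through the maximal operator space $\max(Z_0^\ast)$, which is reflexive because $Z_0$ is.

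Now I would assemble the argument. By hypothesis $\mathcal{T}_{M,M}(\Gamma x)$ factors completely boundedly through $\min X_0$ for a reflexive Banach space $X_0$, while $\mathcal{T}_{M,M}(\Gamma y)$ factors through $\min Y_0$ for a reflexive Banach space $Y_0$, and hence --- by the second step --- also through the maximal reflexive operator space $\max(Y_0^\ast)$. Since $(\min X_0) \tensor^h (\max Y_0^\ast)$ is reflexive (minimal reflexive Haagerupped with maximal reflexive), $M$ is injective, and $\Gamma(xy) = (\Gamma x)(\Gamma y)$ because $\Gamma$ is a homomorphism, Theorem~\ref{vNthm} gives that $\mathcal{T}_{M,M}(\Gamma(xy))$ factors completely boundedly through $(\min X_0) \tensor^h (\max Y_0^\ast)$ and is therefore weakly compact; by Proposition~\ref{Wprop}, $xy \in \WAP(M_\ast)$. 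The crux --- and the only step that is not routine bookkeeping --- is the symmetry $T_z^\ast \iota = T_z$ supplied by co-commutativity: it is precisely what lets a minimal factorization be dualized into a maximal one, so that the failure of the Haagerup tensor product to preserve reflexivity of minimal operator spaces no longer obstructs the application of Theorem~\ref{vNthm}; the remaining ingredients (the adjoint identities, the duality $(\min Z_0)^\ast \cong \max(Z_0^\ast)$, and the invocation of Theorem~\ref{vNthm}) are standard.
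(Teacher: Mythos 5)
Your proposal is correct and follows essentially the same route as the paper's proof: the identity $T_y^\ast\iota = T_y$ that you derive from commutativity of $M_\ast$ is exactly the slice-map identity $\mathcal{T}_{M,M}(\Gamma y)^\ast f = (\id \tensor f)(\Sigma\Gamma y) = \mathcal{T}_{M,M}(\Gamma y)f$ the paper obtains from $\Sigma\Gamma = \Gamma$, and both arguments then dualize the factorization through $\min Y$ into one through $\max Y^\ast$ and invoke Theorem~\ref{vNthm} together with the reflexivity of $(\min X)\tensor^h(\max Y^\ast)$.
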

\begin{proof}
There are reflexive Banach spaces $X$ and $Y$ such that $\mathcal{T}_{M,M}(\Gamma x)$ and $\mathcal{T}_{M,M}(\Gamma y)$ factor through $\min X$ and $\min Y$, respectively. Consequently, $\mathcal{T}_{M,M}(\Gamma y)^\ast \in \CB(M^\ast,M)$ factors completely boundedly through $\max Y^\ast$, as does its restriction to $M_\ast$. Note that, by (\ref{firstiso}) and the co-commutativity of $(M,\Gamma)$, we have
\[
  \mathcal{T}_{M,M}(\Gamma y)^\ast f = (\id \tensor f)(\Gamma y) = (\id \tensor f)(\Sigma \Gamma y) = (f \tensor \id)(\Gamma y) = \mathcal{T}_{M,M}(\Gamma y)f \qquad (f \in M_\ast),
\]
so that $\mathcal{T}_{M,M}(\Gamma y)$ factors through $\max Y^\ast$. As $(\min X) \tensor^h (\max Y^\ast)$ is reflexive, Theorem \ref{vNthm} asserts that $\mathcal{T}_{M,M}(\Gamma(xy)) = \mathcal{T}_{M,M}((\Gamma x)(\Gamma y))$ factors through $(\min X) \tensor^h (\max Y^\ast)$ and thus is weakly compact, i.e., $xy \in \WAP(M_\ast)$.
\end{proof}
\begin{remarks}
\item Of course, the conclusion of Theorem \ref{cocom} remains true if we require instead that $\mathcal{T}_{M,M}(\Gamma x)$ and $\mathcal{T}_{M,M}(\Gamma y)$ both factor completely boundedly through a maximal reflexive operator space.
\item We do not know if $\mathcal{T}_{M,M}(\Gamma(xy))$ factors again through a minimal operator space.
\end{remarks}
\renewcommand{\baselinestretch}{1.0}
\renewcommand{\baselinestretch}{1.2}
\dated
\vfill
\begin{tabbing}
\textit{Author's address}: \= Department of Mathematical and Statistical Sciences \\
\> University of Alberta \\
\> Edmonton, Alberta \\
\> Canada T6G 2G1 \\[\medskipamount]
\textit{E-mail}: \> \texttt{vrunde@ualberta.ca} \\[\medskipamount]
\textit{URL}: \> \texttt{http://www.math.ualberta.ca/$^\sim$runde/}
\end{tabbing}          
\end{document}